\let\oldmarginpar\marginpar
\renewcommand\marginpar[1]{\oldmarginpar[\raggedleft\footnotesize #1]%
{\raggedright\footnotesize #1}}
\newtheorem{theorem}{Theorem}[section]
\newtheorem{corollary}[theorem]{Corollary}
\newtheorem{problem}[theorem]{Problem}
\newtheorem{define}[theorem]{Definition}
\theoremstyle{definition}
\newcommand{\NN}{{\mathbb{N}}}
\newcommand{\QQ}{{\mathbb{Q}}}
\newcommand{\bdy}{{\partial}}
\newcommand{\abs}[1]{{\left\vert #1 \right\vert}}
\newcommand{\GA}{{\mathbb{G}_A}}
\newcommand{\GB}{{\mathbb{G}_B}}
\newcommand\no[1]{}
\newtheorem*{namedtheorem}{\theoremname}
\newcommand{\theoremname}{testing}
\def\la{\langle}
\def\ra{\rangle}
\def\be { \begin{equation} }
\def\ee { \end{equation} }
\begin{document}

\title[]{A Jones slopes characterization of adequate knots}

\author[]{Efstratia Kalfagianni}
\address{Department of Mathematics, Michigan State University, East Lansing, MI, 48824}
\email{kalfagia@math.msu.edu}
\bigskip

\bigskip

\begin{abstract} We establish a characterization of adequate knots in terms of the degree of their colored Jones polynomial.
We show that, assuming the Strong Slope conjecture, our characterization can be reformulated  in terms of  ``Jones slopes" of  knots and the 
essential surfaces that realize the slopes.
For alternating knots the reformulated characterization follows by  recent work of J. Greene and J. Howie.
\bigskip

\bigskip

\bigskip

\noindent {2010 {\em Mathematics Classification:} {\rm  57N10, 57M25.}\\

\noindent{\em Key words: {\rm adequate knot, alternating knot,  boundary slope, crossing number,  colored Jones polynomial,  essential surface,
Jones slope, Jones surface, Turaev genus. }}}

\end{abstract}

\bigskip

\bigskip
\thanks {\today}
\thanks{Supported in part by NSF grants DMS--1105843 and DMS--1404754}

\maketitle

\section{Introduction} Adequate knots form a large class of knots that  behaves well with respect to Jones-type knot invariants and has nice topological and geometric properties
\cite{ Abe, armond, dasbach-lin:head-tail,  FKP,  fkp:filling,  FKP-guts, fkp:survey, fkp:qsf,  FKP-semi, lick-thistle, thi:adequate}.  Several well known classes of knots  are adequate; these include all alternating knots and Conway sums of strongly alternating tangles.
The definition of adequate knots, much like that of  alternating knots, requires the existence of knot diagram of particular type
 (see Definition \ref{defi:adequate}).  The work of Kauffman \cite{Ka}, Murasugi  \cite{murasugitait} and   Thistlethwaite \cite{Thistle}
 that settled the Tait
conjectures, provided a characterization of alternating knots in terms of the degree of the Jones polynomial: It showed that a knot is alternating precisely when the degree span
of its Jones polynomial determines the crossing number of the knot.
In this note we obtain a similar  characterization for adequate knots in terms of the degree span of colored Jones polynomial.
Roughly speaking, we show that adequate knots are characterized by the property that the degree of their colored Jones polynomial determines two basic topological invariants: the crossing number and the Turaev genus.

To state our results, recall that the colored Jones polynomial of a knot $K$ is a collection 
of Laurent polynomials 
$$ \{ J_K(n):=J_{K}(n, t)\ | \ n=1, 2,...\},$$ 
in a variable $t$ such that we have $J_{K}(1, t)=1$ and $J_{K}(2, t)$ is the ordinary Jones polynomial of $K$.
Throughout the paper we will use the normalization adapted in \cite{Effie-Anh-slope}; see Section 2 for more details.
 Let  $d_+[J_{K}(n)]$ and  $d_-[J_{K}(n)]$ denote  the maximal and minimal degree of $J_{K}(n, t)$ in $t$.
Garoufalidis \cite{ga-quasi} showed that, given a knot $K$ there is a number $n_K>0$ such that, for  $n>n_K$, 
  we have
 $$d_+[J_{K}(n)]-d_-[J_{K}(n)]=  s_1(n) n^2 + s_2(n) n  + s_3(n),$$
 \noindent where, for $i=1,2,3$, $s_i:\NN \to \QQ$ is a periodic function  with integral period.
 
 Given a knot diagram $D=D(K)$ one can define its  Turaev genus $g_T(D)$; see Section 4 for details.
 The Turaev genus of a knot $K$, denoted by $g_T(K)$,  is defined to be the minimum  $ g_T(D)$ over all knot diagrams representing $K$.
 Although the original definition of the Turaev genus is based on Kauffman states of knot diagrams \cite{turaevs,DFK}, the work of
 Armond, Druivenga and  Kindred \cite{ADK}  implies that it can be defined purely in terms of certain projections of knots on  certain Heegaard surfaces
 of ${\bf S}^3$.

Our main result is  the following.
 \begin{theorem} \label{mainintro}  For a knot $K$  let  $c(K)$  and  $g_T(K)$ denote the crossing number and the Turaev genus of $K$, respectively.
The knot $K$ is adequate if and only if, for  some $n>n_K$, we have
\begin{equation}
\label{first}
s_1(n)=c(K) /2 \ \ \  {\rm and } \ \ \  s_2(n) =1-g_T(K)-c(K)/2.
\end{equation}
Furthermore, every diagram of $K$ that realizes $c(K)$ is adequate and it also realizes $g_T(K)$.
 \end{theorem}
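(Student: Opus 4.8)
The plan is to derive everything from sharp upper bounds on the growth of the colored Jones degree, reading the quadratic and linear coefficients $s_1(n), s_2(n)$ against the crossing number and the Turaev genus, and then using that adequacy is exactly the condition under which these bounds are attained.

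The first step is to record the master estimate on which the whole argument rests. Building on the state-sum degree bounds of Lickorish--Thistlethwaite \cite{lick-thistle} and Armond \cite{armond} and their colored refinements in \cite{Effie-Anh-slope, FKP-guts}, one shows that for any connected diagram $D$ of $K$ with $c=c(D)$ crossings, and with $v_A$, $v_B$ state circles in the all-$A$ and all-$B$ states, one has for all $n>n_K$
\begin{equation}\label{master}
s_1(n)\le \frac{c}{2}, \qquad \text{and whenever } s_1(n)=\frac{c}{2} \text{ one has } s_2(n)\le 1-g_T(D)-\frac{c}{2},
\end{equation}
where we used the state-circle formula $g_T(D)=1+\frac{c}{2}-\frac{v_A+v_B}{2}$ for the Turaev genus \cite{turaevs, DFK}. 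The crucial refinement is that both bounds in \eqref{master} are \emph{equalities} when $D$ is adequate, and, conversely, that sharpness of the linear bound forces $D$ to be adequate: a loop in the all-$A$ or all-$B$ state graph produces a definite drop in the corresponding extreme degree. Establishing this strict-versus-sharp dichotomy, by tracking the top and bottom $A$-degree terms of the Jones--Wenzl cabling and showing that adequacy is exactly what prevents their cancellation, is the step I expect to be the main obstacle, since it is what converts the numerical identities of \eqref{first} into the combinatorial notion of adequacy.

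Granting \eqref{master}, the forward implication is immediate. If $K$ is adequate, choose an adequate diagram $D$; both bounds are then sharp, so $s_1(n)=c/2$ and $s_2(n)=1-g_T(D)-c/2$ for $n>n_K$. Applying the first bound to every diagram $D'$ of $K$ gives $c(D)=2s_1(n)\le c(D')$, so $D$ realizes $c(K)$ and $s_1(n)=c(K)/2$. By Abe's theorem that an adequate diagram realizes the Turaev genus \cite{Abe} we have $g_T(D)=g_T(K)$, and substituting yields \eqref{first}.

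For the converse, and simultaneously the final assertion, suppose \eqref{first} holds for some $n>n_K$ and let $D$ be any diagram realizing $c(K)$, so $c(D)=c(K)=2s_1(n)$. Then $s_1(n)=c(D)/2$, so the linear bound of \eqref{master} applies and gives $s_2(n)\le 1-g_T(D)-c(K)/2$. Comparing with $s_2(n)=1-g_T(K)-c(K)/2$ yields $g_T(D)\le g_T(K)$, while $g_T(K)\le g_T(D)$ holds by definition; hence $g_T(D)=g_T(K)$ and the linear bound is sharp. By the equality characterization $D$ is adequate, so $K$ is adequate. Since $D$ was an arbitrary diagram realizing $c(K)$, every such diagram is adequate and realizes $g_T(K)$, which is precisely the last statement of the theorem.
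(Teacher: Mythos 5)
Your argument is correct and follows essentially the same route as the paper: your ``master estimate'' together with its sharp-versus-strict dichotomy is exactly Theorem \ref{degLe} (the classical Kauffman-bracket degree bounds, with equality for adequate diagrams, plus Lee's definite drop in the linear coefficient for inadequate ones), combined with Abe's theorem \cite{Abe} and the state-circle formula for $g_T(D)$, and your converse mirrors the paper's comparison $g_T(K)\le g_T(D)$ forced into equality. The one step you flag as the main obstacle --- that a loop in the all-$A$ or all-$B$ state graph forces a strict drop in the linear term --- is precisely Lee's theorem \cite{lee, leethesis}, which the paper likewise invokes as an external input rather than proving.
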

 
 Some ingredients for the proof of Theorem \ref{mainintro}   are a result of Lee \cite{lee, leethesis} on upper bounds on the degree of the  colored Jones polynomial and a result of Abe \cite{Abe} on the Turaev  genus of adequate
 knots.  
 
 It is known that a knot is alternating precisely when  $g_T(K)=0$.
As a corollary of  Theorem \ref{mainintro}  and its proof we have the following.
 \begin{corollary} \label{algorithm} Let the notation and setting be as above.  A knot $K$ is alternating if and only if, for  some $n>n_K$ we have
 \begin{equation}
 \label{alt}
2s_1(n)+2s_2(n)=2 \ \ \ {\rm and} \ \ \  2s_1(n)=c(K).
\end{equation}
Furthermore, every diagram of $K$ that realizes $c(K)$ is alternating.
\end{corollary}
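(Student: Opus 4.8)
The plan is to read off both directions of the equivalence from Theorem \ref{mainintro} together with the quoted fact that a knot is alternating exactly when $g_T(K)=0$. First I would rewrite the conditions in (\ref{alt}): the second equation says $s_1(n)=c(K)/2$, and the first says $s_1(n)+s_2(n)=1$, i.e. $s_2(n)=1-c(K)/2$. The point of the proof is to recognize this pair as exactly the conclusion of Theorem \ref{mainintro} in the special case $g_T(K)=0$, and to use an inequality from the proof of Theorem \ref{mainintro} to force $g_T(K)=0$ in the converse direction.

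For the forward implication, suppose $K$ is alternating. Then $K$ is adequate (alternating knots are adequate) and $g_T(K)=0$, so Theorem \ref{mainintro} provides an $n>n_K$ with $s_1(n)=c(K)/2$ and $s_2(n)=1-g_T(K)-c(K)/2=1-c(K)/2$. Substituting, $2s_1(n)=c(K)$ and $2s_1(n)+2s_2(n)=c(K)+\bigl(2-c(K)\bigr)=2$, which is precisely (\ref{alt}).

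For the converse, assume (\ref{alt}) holds for some $n>n_K$, so that $s_1(n)=c(K)/2$ and $s_1(n)+s_2(n)=1$. Here I would invoke the bounds established in the proof of Theorem \ref{mainintro} (via Lee's upper estimates for the degree of the colored Jones polynomial and Abe's theorem on the Turaev genus of adequate knots), which give, for every $n>n_K$, the general inequality $s_1(n)+s_2(n)\le 1-g_T(K)$. Combined with $s_1(n)+s_2(n)=1$ this yields $1\le 1-g_T(K)$, hence $g_T(K)\le 0$; since the Turaev genus is nonnegative, $g_T(K)=0$ and therefore $K$ is alternating. Moreover, once $g_T(K)=0$ is known, the hypotheses read $s_1(n)=c(K)/2$ and $s_2(n)=1-g_T(K)-c(K)/2$, so the ``if'' direction of Theorem \ref{mainintro} also certifies that $K$ is adequate. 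For the final assertion, the ``furthermore'' clause of Theorem \ref{mainintro} shows that every diagram $D$ of $K$ realizing $c(K)$ is adequate and realizes $g_T(K)=0$, i.e. $g_T(D)=0$; since a connected diagram has vanishing Turaev genus if and only if it is alternating, every such $D$ is alternating.

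The main obstacle is the converse direction, and specifically having the sharp bound $s_1(n)+s_2(n)\le 1-g_T(K)$ available from the proof of Theorem \ref{mainintro}: without it, the equality $s_1(n)+s_2(n)=1$ is merely consistent with $g_T(K)=0$ rather than forcing it. Everything else is a short algebraic manipulation of (\ref{alt}) plus the two quoted facts (alternating $\Leftrightarrow g_T=0$, and alternating $\Rightarrow$ adequate), so the real content lies in confirming that the inequality extracted from the proof is genuinely sharp precisely for adequate diagrams and that it applies to all $n>n_K$.
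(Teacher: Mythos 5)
Your proof is correct and follows essentially the route the paper intends: the forward direction is a direct substitution of $g_T(K)=0$ into Theorem \ref{mainintro}, and the converse forces $g_T=0$ from the linear-term inequality extracted from the proof of Theorem \ref{mainintro} (this is exactly how the paper proves the parallel statement, Corollary \ref{mainalter}). One clarification on the point you yourself flag as the crux: the inequality $s_1(n)+s_2(n)\le 1-g_T(D)$ is \emph{not} unconditional for all $n>n_K$. Theorem \ref{degLe} only gives $4(d_+-d_-)[J_K(n)]\le 2c(D)n^2+2(v_A+v_B-2c(D))n+O(1)$, from which one can read off $s_1(n)\le c(D)/2$ directly, but the linear-coefficient bound $s_2(n)\le 1-g_T(D)-c(D)/2$ only follows once the leading coefficients agree, i.e.\ once $s_1(n)=c(D)/2$. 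In your setting this is supplied by the second equation of (\ref{alt}) applied to a diagram $D$ realizing $c(K)$, so the argument goes through; but stated as a ``general inequality'' it would be false (if $s_1(n)<c(D)/2$ the quadratic term absorbs any $s_2(n)$). Also, the inequality naturally comes out with $g_T(D)$ for the minimal-crossing diagram $D$ rather than $g_T(K)$; since $g_T(K)\le g_T(D)$ this only strengthens your conclusion, and your appeal to the ``furthermore'' clause of Theorem \ref{mainintro} correctly recovers $g_T(D)=0$ for the final assertion.
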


The degree of the colored Jones polynomial is conjectured to contain information about essential surfaces in knot complements.
The {\emph Strong Slope Conjecture} that was  stated by the author and Tran in  \cite{Effie-Anh-slope} and refines the {\emph Slope Conjecture} of Garoufalidis \cite{ga-slope}, asserts 
 that the cluster points of the function $s_1$ are boundary slopes of the knot $K$ and that  the cluster points of $s_2$ 
 predict the topology of essential surfaces
 in the knot complement realizing these boundary slopes. The cluster points of $s_1$ are called {\emph Jones slopes} of $K$. See next section for more details.
 Assuming the Strong Slope Conjecture,  Theorem \ref{mainintro} leads to a characterization of adequate knots in terms of Jones slopes and essential spanning surfaces (see Theorem \ref{mainjones }). In particular, assuming the Strong Slope Conjecture, Corollary \ref{algorithm} can be reformulated as follows:
A knot $K$ is alternating if and only if it
 admits Jones slopes $s, s^{*}$, that are  realized by
 essential spanning surfaces $S$, $S^{*}$, such that 
 
 \begin{equation}
 \label{carjsp}
 (s-s^{*} )/2+\chi(S)+\chi(S^{*})=2 \ \ \ {\rm and} \ \ \  s-s^{*} =2c(K).
\end{equation}

The Strong Slope Conjecture is known for adequate knots \cite{Effie-Anh-slope}; the proof shows that alternating knots satisfy equations \ref{carjsp}.
Conversely, recent work of Howie \cite{howie}  implies that knots that satisfy equation \ref{carjsp}  are alternating, providing additional evidence supporting the conjecture. 
More specifically, Howie  \cite{howie}  and independently  Greene  \cite{greene} obtained
intrinsic topological characterizations of alternating knots in terms of essential spanning surfaces and gave normal surface theory  algorithms to recognize the  alternating property.
In particular, 
 \cite{howie} shows  that a non-trivial knot $K$ is alternating if and only if it admits essential spanning surfaces $S$, $S^{*},$ with boundary slopes  $s, s^{*}$, such that
 
 \begin{equation}
 \label{Altsp}
(s-s^{*})/2+\chi(S)+\chi(S^{*})=2.  
\end{equation}
\vskip 0.06in

Thus if $K$ is a knot that satisfies equations \ref{carjsp}, then it satisfies \ref{Altsp} and thus $K$ is alternating. 
The results of this paper, and in particular  Theorem \ref{mainjones },   and our discussion above, motivate the following 
problem.
\begin{problem} Show that a knot $K$ is adequate if and only if it admits Jones slopes $s, s^{*}$, that are  realized by
 essential spanning surfaces $S$, $S^{*}$, such that 
\begin{equation}
s-s^*= 2c(K) \ \  {\rm and} \ \      \chi(S)+ \chi(S^{*})+c(K) =2-2g_T(K).
\end{equation}
\end{problem}

We should point out that Theorem  \ref{mainintro}  and Corollary \ref{algorithm} also hold for links. 
On the other hand,  at this writing,  the picture of the relations between the degree of colored Jones polynomials and boundary slopes is better developed for knots.
 For this reason, and for simplicity of exposition,  we have chosen to only discuss knots throughout this note.

This paper is organized as follows: In Section 2 we recall the definition of the colored Jones polynomial and above mentioned conjectures from \cite{ga-slope, Effie-Anh-slope}.
In Section 3 we recall definitions and background about adequate knots. In Section 4 first we prove a stronger version of Theorem \ref{main}.
Then, using the fact that the Strong Slopes Conjecture is known for adequate knots (see Theorem \ref{thm:essential}) we reformulate Theorem \ref{main}  in terms of spanning knot surfaces
(Theorem \ref{mainjones }).
In Section 5 we discuss the special case of alternating knots and compare equations \ref {alt}, \ref{carjsp} and \ref{Altsp} above.

\section{The colored Jones polynomial} We briefly recall the definition of the colored Jones polynomial in terms  Chebyshev polynomials. For more details the reader
is referred to \cite{Li}.

   For $n \ge 0$,  the Chebyshev polynomials of the second kind $S_n(x)$,  are defined recursively as follows:
\begin{equation}
\label{chev}
S_{n+2}(x)=xS_{n+1}(x)-S_{n}(x), \quad S_1(x)=x, \quad S_0(x)=1.
\end{equation}

Let $D$ be a diagram of a knot $K$. For an integer $m>0$, let $D^m$ denote the diagram obtained from $D$ 
by taking $m$ parallels copies of $K$. This is the $m$-cable of $D$ using the blackboard framing; if $m=1$ then $D^1=D$. 
Let $\la D^m \ra$ denote the Kauffman bracket of $D^m$: this is a Laurent polynomial over the integers in a variable $t^{-1/4}$ 
normalized so that $\la \text{unknot} \ra = -(t^{1/2}+t^{-1/2})$.  Let $c_+(D)$ and $c_-(D)$ denote the number of positive and negative crossings in $D$, respectively.
Also let
$c=c(D)=c_+(D) + c_-(D)$ denote the crossing number and $w=w(D)=c_+(D) - c_-(D)$ denote the writhe of $D$.

For $n>0$, we define 
$$J_K(n):=  ( (-1)^{n-1} t^{(n^2-1)/4} )^w (-1)^{n-1}  \la S_{n-1}(D)\ra$$
where $S_{n-1}(D)$ is a linear combination of blackboard cablings of $D$, obtained via equation \eqref{chev}, and the notation $\la S_{n-1}(D) \ra$ means extend the Kauffman bracket linearly. That is, for diagrams $D_1$ and $D_2$ and scalars $a_1$ and $a_2$, $\la a_1 D_1 + a_2 D_2 \ra = a_1 \la D_1 \ra + a_2 \la D_2 \ra$.
We have
$$J_{\text{unknot}}(n) = \frac{t^{n/2}- t^{-n/2}}{t^{1/2} -t^{-1/2}}.$$ 

For a knot  $K \subset S^3$, let $d_+[J_{K}(n)]$ and  $d_-[J_{K}(n)]$ denote  the maximal and minimal degree of $J_{K}(n)$ in $t$.

Garoufalidis \cite{ga-quasi}  showed that
the degrees $ d_+[J_{K}(n)] $ and $ d_-[J_{K}(n)] $ are quadratic {\em quasi-polynomials}.
This means that, given a knot $K$, there is $n_K\in \NN$ such that  for all $n>n_K$ we have
 $$4 \, d_+[J_{K}(n)] =  a(n) n^2 + b(n) n  + c(n)\ \ \  {\rm and}  \ \ \   4 \, d_-[J_{K}(n)] =  a^{*}(n) n^2 + b^{*}(n) n  + c^{*}(n),$$
 where the coefficients are periodic functions from $\NN $ to $\QQ$ with integral period. By taking  the least common multiple of the periods
 of these coefficient  functions we get
 a common period. This common period of the coefficient functions is called the {\em Jones period} of $K$.

For a sequence $\{x_n\}$, let $\{x_n\}'$ denote the set of its cluster points. 

\begin{define} \label{jslopes} The elements of the sets 
$$js_K:= \left\{ 4n^{-2}d_+[J_K(n)]  \right\}' \quad
 \mbox{and} \quad js^*_K:= \left\{ 4n^{-2}d_-[J_K(n)] \right\}' $$
 are called {\em Jones slopes} of $K$.  
 \end{define}
 
 Given a knot $K\subset S^3$, let
  $n(K)$ denote a tubular neighborhood of
$K$ and let $M_K:=\overline{ S^3\setminus n(K)}$ denote the exterior of
$K$. Let $\langle \mu, \lambda \rangle$ be the canonical
meridian--longitude basis of $H_1 (\bdy n(K))$.   A properly embedded surface $(S, \bdy S) \subset (M_K,
\bdy n(K))$, is called essential if it's $\pi_1$-injective and it is is not a boundary parallel annulus.
An element $a/b \in
{\QQ}\cup \{ 1/0\}$ is called a \emph{boundary slope} of $K$ if there
is an essential surface $(S, \bdy S) \subset (M_K,
\bdy n(K))$, such that  $\bdy S$ represents $a \mu + b \lambda \in
H_1 (\bdy n(K))$.  Hatcher showed that every knot $K \subset S^3$
has finitely many boundary slopes \cite{hatcher}. The {\em Slope Conjecture} \cite[Conjecture 1.2]{ga-slope}, asserts that the Jones slopes of any knot $K$ are 
 boundary slopes.
\begin{define} \label{jchar} 
Let  $\ell d_+[J_K(n)]$ denote the linear term of $d_+[J_K(n)]$ and let
$$jx_K:= \left\{ 2n^{-1}\ell d_+[J_K(n)]  \right\}'=\left\{b_K(n)\right\}'\  {\rm and} \    jx^{*}_K:= \left\{ 2n^{-1}\ell d_-[J_K(n)]  \right\}'=\left\{b^{*}_K(n)\right\}'.$$
\end{define}
The {\em  Strong Slope Conjecture} \cite[Conjecture 1.6]{Effie-Anh-slope}, asserts that given
a Jones slope of $K$, say
 $a/b\in js_K$, with $b>0$ and $\gcd(a, b)=1$, there is an essential surface $S\subset M_K$, with $\abs{\partial S}$  boundary components, 
and such that each component of $\partial S$ has slope $a/b$ and
$$\frac{2\chi(S)}{{\abs{\partial S} b}} \in jx_K.$$

Similarly,  given  $a^{*}/ b^{*}\in js^{*}_K$, with $b^{*}>0$ and $\gcd(a^{*}, b^{*})=1$,  there is an essential surface $S^{*}\subset M_K$, with $\abs{\partial S^{*}}$  boundary components, 
and such that each component of $\partial S^{*}$ has slope $a^{*}/b^{*}$ and
$$\frac{-2\chi(S^{*})}{{\abs{\partial S^{*}} b}} \in jx^{*}_K.$$

\begin{define}  With the notation as above a  {\em Jones surface} of  $K$, is an essential surface $S\subset M_K$ such that, either
\begin{itemize} 
\item $\partial S$ represents a Jones slope  $a/b\in js_K$, with $b>0$ and $\gcd(a, b)=1$, and  we have
$$ \frac{2\chi(S)}{{\abs{\partial S} b}} \in jx_K; \ \ \  {\rm or}$$
\item$\partial S^{*}$ represents a Jones slope $a^{*}/b^{*}\in js^{*}_K$, with $b^{*}>0$ and $\gcd(a^{*}, b^{*})=1$, and we have  $$ \frac{-2\chi(S^{*})}{{\abs{\partial S^{*}} b^{*}}} \in jx^{*}_K.$$
\end{itemize}

\end{define}

%%%%%%%%%%%%%%%%%%%%%%%%%%%%%%%%%%%%%%%%%%%%
%%%%%%%%%%%%%%%%%%%%%%%%%%%%%%%%%%%%%%%%%%%%
%%%%%%%%%%%%%%%%%%%%%%%%%%%%%%%%%%%%%%%%%%%%

\smallskip

\section{Jones surfaces of adequate knots} 
Let $D$ be a link diagram, and $x$ a crossing of $D$.  Associated to
$D$ and $x$ are two link diagrams,  called the \emph{$A$--resolution} and \emph{$B$--resolution} of
the crossing. See Figure \ref{resolutions}.
A Kauffman state $\sigma$ is a choice of $A$--resolution or
$B$--resolution at each crossing of $D$. The result of applying 
a state
$\sigma$ to $D$ is  a collection  $s_\sigma$
of disjointly embedded circles in the projection plane. 
We can encode the choices that lead
to the state $\sigma$ in a graph $G_\sigma$, as follows. The vertices
of $G_\sigma$ are in $1-1$ correspondence with the state circles of
$s_\sigma$. Every crossing $x$ of $D$ corresponds to a pair of arcs
that belong to circles of $s_\sigma$; this crossing gives rise to an
edge in $G_\sigma$ whose endpoints are the state circles containing
those arcs.

Given a Kauffman state $\sigma$  we construct a surface $S_\sigma$,
as follows. Each state circle of $\sigma$ bounds a disk in $S^3$. This
collection of disks can be disjointly embedded in the ball below the
projection plane. At each crossing of $D$, we connect the pair of
neighboring disks by a half-twisted band to construct a surface
$S_\sigma \subset S^3$ whose boundary is $K$. See Figure
\ref{resolve}.

\begin{figure}
  \includegraphics[scale=1.4]{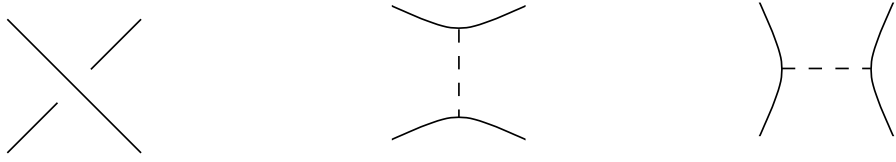}
  
  \caption{From left to right: A crossing, the $A$-resolution and  the
 the $B$-resolution.}
    
  \label{resolutions}
\end{figure}

\begin{figure}[ht]
%\begin{center}
\includegraphics[scale=.8]{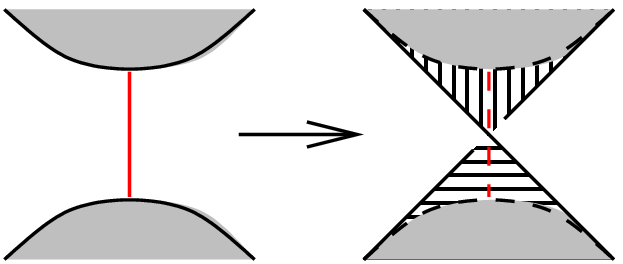}
\hspace{2cm}
\includegraphics[scale=.8]{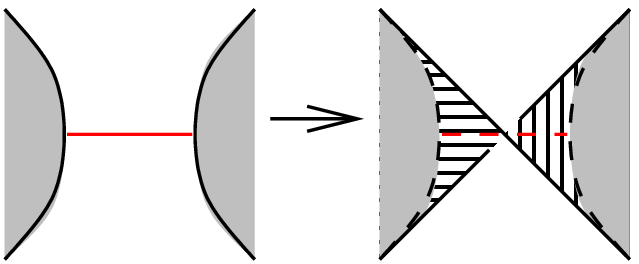}

\caption{The two resolutions of a crossing, the arcs recording them  and their contribution to state surfaces.} 
\label{resolve}
%\end{center}
\end{figure}

\begin{define}\label{defi:adequate}
A link diagram $D$ is called \emph{$A$--adequate} if the state graph
$\GA$ corresponding to the all--$A$ state contains no 1--edge loops.
Similarly, $D$ is called \emph{$B$--adequate} if the all--$B$ graph $\GB$
contains no 1--edge loops.  A link diagram 
is \emph{adequate} if it is
both $A$-- and $B$--adequate.  A link that admits an adequate diagram
is also called \emph{adequate}.
\end{define}
It is known that  the number of negative crossings  $c_-(D)$  of an $A$--adequate knot diagram is a knot invariant.
Similarly, the number of positive crossings $c_+(D)$ of  a  $B$-adequate knot diagram is a knot invariant.
In fact, the crossing number of $K$ is realized by the adequate diagram; that is  we have $c(K)=c(D)=c_-(D)+c_+(D)$ \cite{Li}.
Let $v_A(D)$ and  $v_B(D)$ be the number of state circles in the all--$A$ (resp. all--$B$) state of the knot diagram $D$.
Also let $S_A=S_A(D)$ and $S_B=S_B(D)$ denote the surfaces corresponding to the all-$A$ and all-$B$ state of $D$.
The following  theorem summarizes known results about bounds on the degree of the colored Jones polynomials.
The first inequalities in both part (a) and (b) below are well known results that can be found, for example, in Lickorish's book
\cite[Lemma 5.4]{Li}. Inequalities \ref{lee1} and \ref{lee2},  that generalize and strengthen results of \cite{Kalee}, have been more recently established by Lee. See   \cite[Theorem 2.4]{lee}
or \cite{leethesis}.

\begin{theorem} 
\label{degLe}
Let $D$ be a diagram of a knot $K$.
\smallskip

\smallskip

(a) We have
$$4\, d_-[J_{K}(n)] \ge  -2c_- (D) n^2 + 2(c(D) -v_A(D)) n  +2 v_A(D) -2 c_+(D).$$
If $D$ is $A$--adequate, then  equality holds for all $n \ge 1$. Moreover, if $D$ is not $A$--adequate
then 
\begin{equation}
\label{lee1}
4\, d_-[J_{K}(n)] \ge  -2c_-(D) n^2 + 2(c(D) -v_A(D)+1) n  +e(n),
\end{equation}
\noindent where $e(n): \NN \to \QQ $ is a periodic function of $n$ with integral period.
\smallskip

\medskip

\medskip

\smallskip

(b) We have
$$
4 \, d_+[J_{K}(n)] \le 2c_+ (D)n^2 + 2(v_B(D) - c(D)) n +2 c_-(D)-  2v_B(D).
$$
If $D$ is $B$--adequate, then  equality holds for all $n \ge 1$. Moreover, if  $D$ is not  $B$--adequate
then 
\begin{equation}
\label{lee2}
4 \, d_+[J_{K}(n)] \le 2c_+(D) n^2 + 2(v_B(D) - c(D)-1) n +e^{*}(n),
 \end{equation}
\noindent where  $e^{*}(n): \NN \to \QQ $ is a periodic function of $n$ with integral period. \qed
\end{theorem}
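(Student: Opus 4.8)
The plan is to reduce everything to the Kauffman bracket $\langle S_{n-1}(D)\rangle$ and to a state-sum estimate, treating part (a) and deducing part (b) by mirror symmetry (passing to the mirror image interchanges the $A$- and $B$-resolutions, $c_\pm$, and $d_\pm$). Since $J_K(n)$ differs from $\langle S_{n-1}(D)\rangle$ only by the monomial factor $t^{w(n^2-1)/4}$ (the signs $(-1)^{n-1}$ do not affect degrees), this factor shifts every $t$-degree uniformly, so that
$$4\, d_-[J_K(n)] = w(n^2-1) + 4\, d_-[\langle S_{n-1}(D)\rangle],$$
and the task becomes locating the minimal $t$-degree, equivalently the maximal degree in $A:=t^{-1/4}$, of the bracket $\langle S_{n-1}(D)\rangle$.

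For the basic inequality I would expand $S_{n-1}(D)$ through the Chebyshev recursion \eqref{chev} as a signed sum of blackboard cables $D^{n-1}, D^{n-3}, \dots$, in which the top cable $D^{n-1}$ carries coefficient $+1$ and the most crossings (namely $m^2 c$ with $m=n-1$). Because the top $A$-degree $j^2 c + 2 j\, v_A(D)$ of a $j$-strand cable is strictly increasing in $j$, the lower cables cannot reach the top degree, so the extreme $A$-degree of $\langle S_{n-1}(D)\rangle$ is governed by $\langle D^{n-1}\rangle$. Applying the standard state sum to $D^m$, the all-$A$ state contributes $A^{m^2 c}(-A^2-A^{-2})^{v_A(D^m)}$ with top power $m^2 c + 2\,v_A(D^m)$, and switching any resolution lowers $a-b$ by $2$ while changing the circle count by $\pm 1$, so no state exceeds this power; hence the maximal $A$-degree of $\langle D^m\rangle$ is at most $m^2 c + 2\,v_A(D^m)$. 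The key book-keeping identity is $v_A(D^m)=m\,v_A(D)$: the all-$A$ resolution of the cable is $m$ disjoint parallel copies of the all-$A$ state of $D$, since the $A$-smoothing is coherent with cabling and produces no turnback circles at the crossing blocks. Substituting $m=n-1$, converting back to $t$-degrees, and adding the writhe shift then yields, after collecting the $n^2$, $n$, and constant terms, exactly the coefficients $-2c_-$, $2(c-v_A(D))$, and $2v_A(D)-2c_+$.

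For the equality when $D$ is $A$-adequate I would use the defining feature of adequacy: $\GA$ has no $1$-edge loops, so switching any single crossing from its $A$- to its $B$-resolution \emph{merges} two distinct state circles and lowers the top power of $A$ by $4$; consequently the extreme term of $\langle D^m\rangle$ cannot be cancelled by a neighbouring state. As cabling preserves $A$-adequacy, the same holds for $D^{n-1}$, and the inequality is sharp for all $n\ge 1$. For the strengthened estimate \eqref{lee1} when $D$ is not $A$-adequate, a $1$-edge loop in $\GA$ yields a crossing whose resolution-switch instead \emph{splits} a circle and hence keeps the top $A$-power; the leading coefficient of $\langle D^{n-1}\rangle$ is then a sum of competing contributions, and I would show (following Lee) that they cancel, so that the surviving maximal $A$-degree drops by an amount linear in $n$. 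This improves the coefficient of $n$ from $2(c-v_A)$ to $2(c-v_A+1)$, while the exact constant term—whose value depends on how the cancellations among the cables $D^{n-1},D^{n-3},\dots$ interact modulo the Jones period—is absorbed into the periodic function $e(n)$.

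The main obstacle is precisely this non-adequate refinement (and its mirror in part (b)): proving that the cancellation of the leading term lowers the maximal $A$-degree by \emph{exactly} one unit in the linear coefficient, and that every competing contribution from the lower cables $D^{n-3},D^{n-5},\dots$ and from the remaining states adjacent to the all-$A$ state stays strictly below this level. This demands a careful combinatorial analysis of how a single $1$-edge loop propagates through the $m$-cable and interacts with the binomial coefficients of the Chebyshev expansion—this is the content of Lee's theorem \cite[Theorem 2.4]{lee} that I would cite rather than reprove, whereas the basic inequalities and the adequate equality cases are the classical state-sum arguments of \cite[Lemma 5.4]{Li}.
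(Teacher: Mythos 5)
The paper does not actually prove this theorem: it is stated as a summary of known results, with the basic inequalities and the adequate equality cases attributed to Lickorish's book \cite[Lemma 5.4]{Li} and the refined inequalities \eqref{lee1}, \eqref{lee2} attributed to Lee \cite[Theorem 2.4]{lee}, \cite{leethesis}, and the statement closed with a \verb|\qed|. Your sketch of the classical part is sound and is essentially the standard argument those references contain: reduce to the maximal $A$-degree of $\la S_{n-1}(D)\ra$ via the writhe shift, bound each cable $\la D^{j}\ra$ by the all-$A$ state contribution $j^2c+2v_A(D^j)$, observe that the bounds are strictly increasing in $j$ so only $D^{n-1}$ matters, and use adequacy of $D^{n-1}$ (inherited from $D$) to prevent cancellation of the extreme term; the mirror argument for (b) is also standard. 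The one step you assert rather than justify --- that the all-$A$ state of the blackboard $m$-cable has exactly $m\,v_A(D)$ circles, i.e.\ that the all-$A$ smoothing of each $m\times m$ crossing block is planar-isotopic to $m$ parallel copies of the $A$-smoothing with no closed turnback components --- is true, but it is precisely the content of the Lickorish--Thistlethwaite cabling lemma and deserves an explicit citation or verification, since it is where the coefficient $2(c(D)-v_A(D))$ of $n$ comes from. Your decision to cite Lee for the strengthened non-adequate estimates rather than reprove them matches the paper's own treatment, so there is no gap relative to what the paper provides.
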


The following theorem, which shows that the Strong Slope Conjecture is true for adequate knots,  was proven 
in \cite{Effie-Anh-slope} building on work   in  \cite{FKP, FKP-guts}.

\begin{theorem}\label{thm:essential} 
Let $D$ be an $A$--adequate diagram of a knot $K$. Then the 
surface $S_A$ is essential  in the knot
complement $M_K$, and it  has boundary slope $-2c_-$. Furthermore, we have
$$-2c_- = \lim_{n \to \infty} 4\, n^{-2}d_-[J_K(n)] \ \ \ {\rm  and } \ \ \  2\chi(S_A)= 2(v_A(D)-c(D)).$$

Similarly, if $D$ is a $B$--adequate diagram of a
knot $K$, then $S_B$ is essential in the knot 
complement $M_K$, and it  has boundary slope $-2c_+$.
 Furthermore, we have
$$2c_+ =  \lim_{n \to \infty} 4\, n^{-2}d_+[J_K(n)] \ \ \ {\rm and } \ \ \  2\chi(S_B)= 2(v_B(D)-c(D)). $$
In particular, if $K$is adequate, then it satisfies the Strong Slope Conjecture and $S_A$, $S_B$ are Jones surfaces.\qed
\end{theorem}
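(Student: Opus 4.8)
The plan is to establish the statement for the all--$A$ surface of an $A$--adequate diagram $D$; the assertions for $S_B$ then follow by applying that case to a mirror image $\bar D$ of $D$, since mirroring interchanges the $A$-- and $B$--resolutions at every crossing (so that $v_A(\bar D)=v_B(D)$ and $c_{\pm}(\bar D)=c_{\mp}(D)$) and sends $d_+[J_{\bar K}(n)]$ to $-d_-[J_K(n)]$. I would therefore reduce everything to three claims about $S_A$: that $\chi(S_A)=v_A(D)-c(D)$; that $S_A$ is essential in $M_K$ with boundary slope $-2c_-$; and that $-2c_-=\lim_{n\to\infty}4n^{-2}d_-[J_K(n)]$. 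The first and third are bookkeeping on top of Theorem \ref{degLe}, whereas the second carries the topological content.

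For the Euler characteristic I would read off the handle structure intrinsic to the construction of $S_A$: it is built from $v_A(D)$ disks, one for each all--$A$ state circle, joined along $c(D)$ half--twisted bands, one for each crossing. Each disk contributes $+1$ and each band is a $1$--handle lowering the Euler characteristic by $1$, so $\chi(S_A)=v_A(D)-c(D)$ and hence $2\chi(S_A)=2(v_A(D)-c(D))$. For the Jones slope I would invoke Theorem \ref{degLe}(a): because $D$ is $A$--adequate the lower bound is an equality for all $n\ge 1$, namely $4\,d_-[J_K(n)]=-2c_-n^2+2(c(D)-v_A(D))n+2v_A(D)-2c_+$. Dividing by $n^2$ and letting $n\to\infty$ annihilates the linear and constant terms, giving $\lim_{n\to\infty}4n^{-2}d_-[J_K(n)]=-2c_-$; in particular $js^*_K=\{-2c_-\}$.

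The essentiality and the boundary--slope computation are the steps I expect to be the main obstacle, since neither is visible from the degree formula. Here I would appeal to the geometry of state surfaces developed in \cite{FKP, FKP-guts}: for an $A$--adequate diagram the all--$A$ state surface $S_A$ is $\pi_1$--injective and boundary--incompressible in $M_K$, and is not a boundary--parallel annulus, hence is essential in the sense required here; the $A$--adequacy hypothesis (no $1$--edge loops in $\GA$) is exactly what is needed to rule out the compressions that would otherwise occur. The boundary slope is then a framing computation: $S_A$ is a spanning surface for $K$, and comparing the framing it induces on $K$ with the Seifert framing, the only contributions come from the half--twisted bands at the crossings, which after accounting for the crossing signs total $-2c_-$; thus $\partial S_A$ has slope $-2c_-$ on $\partial n(K)$, matching the Jones slope found above.

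Finally I would assemble the Strong Slope Conjecture statement. By the previous step $-2c_-\in js^*_K$ is realized by the essential surface $S_A$, so it remains to match the linear data. Reading the linear coefficient off the same exact formula gives $2n^{-1}\ell d_-[J_K(n)]=c(D)-v_A(D)=-\chi(S_A)$, so this value lies in $jx^*_K$; comparing with Definition \ref{jchar} and the Jones--surface condition of the previous section (with the slope $-2c_-$ in lowest terms) then exhibits $S_A$ as a Jones surface. Running the mirror argument produces the parallel conclusions for $S_B$ from Theorem \ref{degLe}(b). Together these show that an adequate knot satisfies the Strong Slope Conjecture and that $S_A$ and $S_B$ are its Jones surfaces, completing the proof.
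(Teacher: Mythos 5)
The paper does not actually prove this theorem: it is stated with a \qed and attributed to \cite{Effie-Anh-slope}, building on \cite{FKP, FKP-guts}, so there is no in-paper argument to match yours against. That said, your reconstruction is a faithful outline of how the cited proof goes, and you correctly isolate the one step that is genuinely non-elementary --- the $\pi_1$-injectivity and boundary-incompressibility of $S_A$ for an $A$--adequate diagram --- and defer it to exactly the right sources; everything else (the mirror reduction $S_B(D)=S_A(\bar D)$ with $d_+[J_{\bar K}(n)]=-d_-[J_K(n)]$, the disk-and-band count $\chi(S_A)=v_A(D)-c(D)$, and extracting the quadratic coefficient from the exact degree formula of Theorem \ref{degLe}(a)) is correct bookkeeping. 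Two small cautions. First, the boundary-slope claim is the other place where real content hides: ``the half-twisted bands total $-2c_-$'' is the right answer, but the count requires comparing the $A$-resolution with the orientation at each crossing (positive crossings contribute $0$ and negative crossings contribute $-2$ to the $S_A$-framing relative to the Seifert framing), and a careless version of this computation produces $-2w(D)$ or $-2c(D)$ instead; this is worth writing out rather than asserting. Second, your identification $2n^{-1}\ell d_-[J_K(n)]=c(D)-v_A(D)=-\chi(S_A)$ differs by a factor of $2$ from the quantity $-2\chi(S_A)/(|\partial S_A|\,b^*)$ appearing in the Jones-surface condition; this mismatch is inherited from the paper's own Definition \ref{jchar}, where the two displayed descriptions of $jx^{*}_K$ (as $\{2n^{-1}\ell d_-[J_K(n)]\}'$ and as $\{b^{*}_K(n)\}'$) already disagree by that same factor, so you should fix a convention and check the match against it rather than against both.
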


\section{ Colored Jones polynomials and adequate knots}

Let the notation be as in the last section. We recall that the {\emph Turaev genus} of a knot diagram $D=D(K)$ is  defined by
\begin{equation}
\label{Tur}
g_T(D)=(2-v_A(D)-v_B(D)+c(D))/2
\end{equation}
The Turaev genus of a knot $K$ is defined by
\begin{equation}
\label{TurK}
g_T(K)= {\rm min } \left\{ g_T(D)\ | \   D=D(K) \right \}
\end{equation}
The genus $g_T(D)$ is the genus of the {\emph Turaev surface} $F(D)$,  corresponding to $D$. This surface is constructed as follows:
Let $\Gamma \subset S^2$ be the planar, 4--valent graph of the 
diagram $D$.  Thicken the (compactified) projection plane to $S^2 \times
[- 1, 1]$, so that $\Gamma$ lies in $S^2 \times \{0\}$. Outside a
neighborhood of the vertices (crossings),  $\Gamma \times [- 1, 1]$ will be part of $F(D)$. 

In the neighborhood of
each vertex, we insert a saddle, positioned so that the boundary
circles on $S^2 \times \{1\}$ are the
components
of the $A$--resolution and the boundary circles on $S^2
\times \{- 1\}$ are the components of  the  $A$--resolution.
See Figure
\ref{saddle}. Then, we cap off each circle with a disk, obtaining
a  closed surface $F(D)$.

\begin{figure}[ht]
\begin{center}
\includegraphics[scale=1.2]{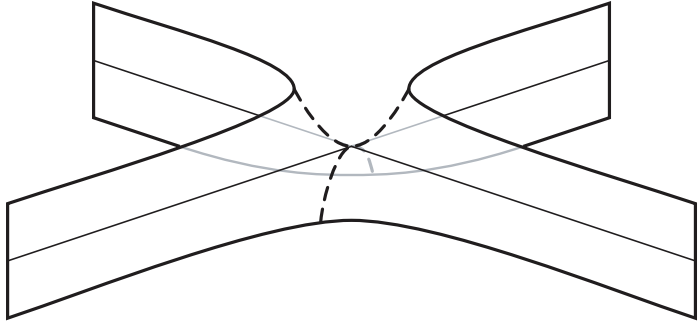}
\end{center}
\caption{A saddle between components of the $A$, $B$-resolutions near a vertex of the 4-valent graph $\Gamma$ corresponding to crossing of $D$. The portion of $\Gamma$ on the saddle,  is indicated in solid line. 
The dashed lines indicate the edges of the state graphs $\GA$, $\GB$ corresponding to the crossing. } 
\label{saddle}
\end{figure}

The surface $F(D)$ has the following properties: 
\smallskip

(i) It  is a  Heegaard surface of ${S}^{3}$.
\smallskip

 (ii) $D$ is alternating  
on $F(D)$; in particular $D$ is an alternating diagram if and only if $g_T(F(D))=0$.
\smallskip

(iii) the 4-valent graph underlying $D$ defines a cellulation of $F(D)$ for which the 2-cells can be colored in a 
checkerboard fashion.
\medskip

We warn the reader that these properties are not enough to characterize the Turaev surface $F(D)$.
There reader is referred to \cite{DFK} or to a survey article by Champanerkar and Kofman  \cite{CK} for more details.
\vskip 0.06in

We will need the following result of Abe \cite[Theorem 3.2]{Abe}.
\begin{theorem} \label{abe} Suppose that $D$ is an adequate diagram of a knot $K$. Then we have

$$g_T(K)=g_T(D)=(2-v_A(D)-v_B(D)+c(D))/2.$$ \qed
\end{theorem}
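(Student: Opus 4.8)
The plan is to split the claim into the two inequalities $g_T(K)\le g_T(D)$ and $g_T(K)\ge g_T(D)$. The first is immediate: $g_T(K)$ is by definition the minimum of $g_T(D')$ over all diagrams $D'$ of $K$, so $g_T(K)\le g_T(D)$. All the content lies in the reverse inequality, equivalently in showing $g_T(D')\ge g_T(D)$ for \emph{every} diagram $D'$ of $K$.

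First I would pin down $g_T(D)$ from the colored Jones polynomial. Since $D$ is adequate, both inequalities of Theorem \ref{degLe} are equalities for all $n\ge 1$, giving
\begin{equation*}
4\,d_+[J_K(n)] = 2c_+(D)\,n^2 + 2\bigl(v_B(D)-c(D)\bigr)n + 2c_-(D)-2v_B(D),
\end{equation*}
\begin{equation*}
4\,d_-[J_K(n)] = -2c_-(D)\,n^2 + 2\bigl(c(D)-v_A(D)\bigr)n + 2v_A(D)-2c_+(D).
\end{equation*}
Subtracting, the quasi-polynomial $4\bigl(d_+[J_K(n)]-d_-[J_K(n)]\bigr)$ has leading coefficient $2c(D)$ and linear coefficient $2\bigl(v_A(D)+v_B(D)\bigr)-4c(D)$. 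As the left-hand side is an invariant of $K$, these coefficients are independent of the adequate diagram used, and rearranging them produces exactly $g_T(D)=1-\tfrac12\bigl(v_A(D)+v_B(D)\bigr)+\tfrac12 c(D)$, the asserted formula. In particular $g_T(D)$ is a fixed function of the stable colored Jones data; denote it $I(K)$.

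For the reverse inequality, I would feed the first (always valid) inequalities of Theorem \ref{degLe} into the same subtraction and substitute $v_A(D')+v_B(D')=c(D')+2-2g_T(D')$, which after factoring gives
\begin{equation*}
2\bigl(d_+[J_K(n)]-d_-[J_K(n)]\bigr) \le (n-1)\bigl(c(D')\,n + 2 - 2g_T(D')\bigr)
\end{equation*}
for all $n\ge 1$. Comparing leading coefficients recovers $c(D')\ge c(K)$, and on a diagram realizing $c(K)$ the quadratic terms agree, so the linear terms may be compared; however this only bounds the degree span from \emph{above}, and the resulting inequality for $g_T(D')$ points in the direction opposite to the one we need. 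Thus the one-sided estimates of Theorem \ref{degLe} are by themselves insufficient: they do not rule out a diagram---non-adequate and crossing-minimal, or carrying extra crossings---whose Turaev genus undercuts $I(K)$.

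The main obstacle is therefore a genuine \emph{lower} bound for the Turaev genus valid for all diagrams. My proposal is to extract it from the ribbon-graph description of the Turaev surface: following Dasbach--Futer--Kalfagianni--Lin--Stoltzfus \cite{DFK}, the Kauffman bracket of $D'$ is an evaluation of the Bollob\'as--Riordan polynomial of the all--$A$ ribbon graph $\GA$ carried by $F(D')$, a surface of genus $g_T(D')$. Bounding the breadth of this polynomial in its genus-detecting variable should yield an inequality $g_T(K)\ge I(K)$ whose equality case forces $\GA$ and $\GB$ to carry no $1$--edge loops, i.e.\ $D'$ to be adequate. Combined with the computation of $I(K)$ above this gives $g_T(K)=g_T(D)=I(K)$; tracking the equality cases back through Theorem \ref{degLe} then yields the addendum that every crossing-minimal diagram of $K$ is adequate.
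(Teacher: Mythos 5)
The paper does not actually prove this statement: it is quoted verbatim from Abe \cite[Theorem 3.2]{Abe} and used as an input, so there is no internal proof to compare against. Judged on its own terms, your reduction is sound as far as it goes: $g_T(K)\le g_T(D)$ is definitional, your computation showing that for adequate diagrams the quantities $c(D)$ and $v_A(D)+v_B(D)-2c(D)$ (hence $g_T(D)$) are read off from the stable degrees of $J_K(n)$ is correct, and you are right that the one-sided bounds of Theorem \ref{degLe} point the wrong way for the remaining inequality. Indeed they show that any crossing-minimal diagram $D'$ satisfies $g_T(D')\le g_T(D)$, the opposite of what is needed, and say nothing about diagrams with extra crossings.

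The genuine gap is the final step, where all the content lives: you need a diagram-independent \emph{lower} bound for $g_T(D')$, and the route you propose --- bounding the breadth of the Bollob\'as--Riordan polynomial of $\GA$ and passing through its Kauffman bracket evaluation --- cannot deliver it. The Bollob\'as--Riordan polynomial of the ribbon graph is itself a diagram-dependent object; only its one-variable specialization to the Kauffman bracket is a knot invariant, and the genus-detecting variable does not survive that specialization. What does survive is exactly the Kauffman--Murasugi--Thistlethwaite--Turaev type estimate (set $n=2$ in Theorem \ref{degLe} and use $v_A(D')+v_B(D')=c(D')+2-2g_T(D')$), namely
\begin{equation*}
d_+[J_K(2)]-d_-[J_K(2)] \;\le\; c(D')+1-g_T(D'),
\end{equation*}
which bounds $g_T(D')$ from \emph{above} in terms of $c(D')$, never from below. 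So the "should yield" step is not merely unproven; the tool is structurally incapable of producing the inequality $g_T(D')\ge g_T(D)$. Abe's actual argument uses a genuinely different invariant: the width $w_{Kh}(K)$ of Khovanov homology satisfies $w_{Kh}(K)-2\le g_T(D')$ for every diagram $D'$ (Champanerkar--Kofman--Stoltzfus, Manturov), and for an adequate diagram $D$ the extreme Khovanov groups are nontrivial in the positions forced by adequacy, giving $w_{Kh}(K)-2=g_T(D)$; sandwiching then yields $g_T(K)=g_T(D)$. Replacing your last paragraph with this Khovanov-width lower bound (or some other invariant that genuinely bounds Turaev genus from below) is necessary to close the argument.
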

We are now ready to state and prove the main result of this paper, which implies Theorem \ref{mainintro} stated in the introduction.
\begin{theorem} \label{main}  For a knot  $K$  let 
$js_K$, $ js^*_K $ and $jx_K, jx^{*}_K$ be the sets associated to $J_K(n)$ as in Definitions  \ref{jslopes} and \ref{jchar}. Also let $c(K)$  and  $g_T(K)$ denote the crossing number and the Turaev genus of $K$, respectively.
Then, $K$ is adequate if and only if the following are true:

\begin{enumerate}
\smallskip

\item  There are Jones slopes $s\in js_K$ and $s^{*} \in  js^{*}_K$, with $s-s^{*} =2c(K);$  and
\smallskip

\item there are  $x\in jx_K$ and $x^*\in  jx^{*}_K$ with  $x-x^* =2 (2-2g_T(K)-c(K)).$
\smallskip

\end{enumerate}

\noindent Furthermore, any diagram of $K$ that realizes  $c(K)$ is adequate and it also realizes $g_T(K)$.
\end{theorem}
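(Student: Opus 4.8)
The plan is to prove both directions by extracting the quasi-polynomial data of $J_K(n)$ from the degree bounds in Theorem \ref{degLe} and matching them against the Turaev genus formula. First I would fix a diagram $D$ of $K$ and record the key identities. Combining the two equality statements of Theorem \ref{degLe}(a),(b), for an adequate diagram the leading coefficients give $a(n)=2c_+(D)$ and $a^*(n)=-2c_-(D)$, hence the Jones slopes $s=2c_+$ and $s^*=-2c_-$ satisfy $s-s^*=2(c_++c_-)=2c(D)=2c(K)$, using that an adequate diagram realizes the crossing number. For the linear terms one reads off $b_K(n)=2(v_B(D)-c(D))$ and $b^*_K(n)=2(c(D)-v_A(D))$, so the associated elements of $jx_K$, $jx^*_K$ are $x=2(v_B-c)$ and $x^*=2(c-v_A)$. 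Then $x-x^*=2(v_A+v_B-2c)$, and substituting the Turaev genus formula $g_T(D)=(2-v_A-v_B+c)/2$ from equation \eqref{Tur} rewrites this as $x-x^*=2(2-2g_T(D)-c)$. Since $D$ is adequate, Theorem \ref{abe} gives $g_T(D)=g_T(K)$, yielding exactly condition (2). This establishes the forward direction.

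For the converse, suppose (1) and (2) hold and let $D$ be any diagram realizing $c(K)$. The strategy is to show that the inequalities of Theorem \ref{degLe} must all be sharp, which forces $A$- and $B$-adequacy. The point is that the general (possibly non-adequate) bounds \eqref{lee1}, \eqref{lee2} have a linear coefficient that is strictly smaller (by one, in the appropriate sense) than the adequate value, reflecting the loss recorded by the ``$+1$'' and ``$-1$'' corrections. I would argue that if $D$ failed to be $A$-adequate, the genuine value of $a^*(n)$ or $b^*_K(n)$ would be shifted so that the sum $s-s^*$ or $x-x^*$ could not simultaneously meet the two prescribed values; quantitatively, the quasi-polynomial coefficients are pinned by the inequalities together with the known equalities in the adequate range, so equality in Theorem \ref{degLe} is the only way to realize both \eqref{first}-type constraints at once. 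This shows $D$ must be adequate, and then Theorem \ref{abe} again gives $g_T(D)=g_T(K)$, so $D$ realizes the Turaev genus as claimed.

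The main obstacle I anticipate is the converse's rigidity step: one must rule out the possibility that a non-adequate diagram accidentally produces the same Jones slopes and linear coefficients. The subtlety is that Theorem \ref{degLe} bounds $d_\pm[J_K(n)]$ for a particular diagram $D$, while $js_K$, $jx_K$ are intrinsic invariants defined via cluster points; so I must pass carefully between ``the bound for the crossing-minimal diagram $D$'' and ``the actual quasi-polynomial of $K$.'' The resolution should come from the observation that for \emph{any} diagram the upper bound \eqref{lee2} (resp. lower bound \eqref{lee1}) holds, so the intrinsic $a(n)$, $b_K(n)$ are squeezed between what a minimal diagram gives and the universal bounds; the hypotheses (1) and (2) then force these squeezed values to their extreme, which can only happen at an adequate diagram. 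I would make this precise by comparing the arithmetic of $2c_+(D)$, $2c_-(D)$, $v_A(D)$, $v_B(D)$ across competing diagrams and invoking that $c_-(D)$ is a knot invariant for $A$-adequate diagrams (and $c_+(D)$ for $B$-adequate ones), as recalled after Definition \ref{defi:adequate}.
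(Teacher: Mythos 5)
Your proposal is correct and follows essentially the same route as the paper: the forward direction reads the quadratic and linear coefficients off the equalities in Theorem \ref{degLe} and substitutes the Turaev genus formula together with Theorem \ref{abe}, while the converse fixes a crossing-minimal diagram $D$, uses hypothesis (1) with $c_+(D)+c_-(D)=c(K)$ to pin $s=2c_+(D)$, $s^*=-2c_-(D)$, uses hypothesis (2) with $g_T(K)\le g_T(D)$ to pin the linear coefficients, and then invokes the strict $\pm 1$ corrections in \eqref{lee1}--\eqref{lee2} to rule out non-adequacy. The only caution is that your converse is a sketch: to make it rigorous you must, as the paper does, pass to the arithmetic progressions of $n$ on which the periodic coefficient functions attain the cluster values $s,s^*,x,x^*$ before comparing against the diagrammatic bounds.
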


\begin{proof} Suppose that $K$ is a knot with an adequate diagram $D=D(K)$.  We know that $c(K)=c(D)=c_+(D)+c_-(D)$.
 By Theorem \ref{degLe}, equation \ref{Tur}, and Theorem \ref{abe} we have 
 
\begin{eqnarray*}
 4 \, d_+[J_{K}(n)] -4 \, d_-[J_{K}(n)]  =& \\
 2c(K)n^2+2(v_B(D)+v_A(D)-2c(D)) n+ 2(c(D)-v_B(D)-v_A(D)) = &  \\
  2c(K)n^2+2(2-2g_T(D)-c(D)) n+ 2(c(D)-v_B(D)-v_A(D))=&\\
  2c(K)n^2+2(2-2g_T(K)-c(K)) n+ 2(2g_T(K)-2).
\end{eqnarray*}
\noindent where the last equations follows from the fact that, since $D$ is adequate,  by Theorem
 \ref{abe} we have $g_T(D)=g_T(K)$. Thus the quantities $s=2c_+(D)$, $s^{*}=-2c_-(D)$,  $ x=2(v_A(D)-c(D))$ and $x^{*}=2(c(D)-v_B(D))$
 satisfy the desired equations.

Conversely, suppose that  we have $s, s^{*}, x, x^{*}$ as in the statement above and let
$p=p(K)$ denote the common period
of $4 \, d_-[J_{K}(n)] ,  4 \, d_+[J_{K}(n)]$.

There is $0\leq i \leq p$, such that
for infinitely many $n>>0$ we have
$$a(n)=s \ \ \ {\rm and} \ \ \   a^{*}(n+i)=s^{*}. $$
Let $D$ be a diagram of $K$ that realizes the crossing number $c(K)$. Let $c_+(D)$ and $c_-(D)$ denote the number of positive and negative crossings 
in $D$, respectively.  By applying Theorem \ref{degLe} to $D$ we must have
$$s n^2 +o(n)\leq  2c_+(D) n^2+o(n),$$
for infinitely many $n>>0$; hence we obtain
$s\leq 2c_+(D)$.
Similarly, we have
$$s^{*} (n+i)^2 +o(n)\leq  2c_-(D) (n+i)^2+o(n),$$
and we 
conclude that and $- s^{*}\leq 2c_-(D)$. Since by assumption
$s-s^{*} =2c(K)$, and $c_-(D)+c_+(D)=c(K)$ we conclude that 
\begin{equation}
\label{s=c}
s=2c_+(D)\ \ \ {\rm and} \ \ \ -s^{*}= 2c_-(D).
\end{equation}

To continue recall that by assumption, there is  $0\leq j\leq p$, such that
for infinitely many $n>>0$ we have
\begin{equation}
\label{b=x}
b(n)=x \ \ \ {\rm and} \ \ \   b^{*}(n+j)=x^{*}.
\end{equation}
Now, by equation \ref{b=x}, and using  Theorem  \ref{degLe} as above,  we obtain
$$x \leq  2(v_B(D) - c(D)).$$ Similarly, using that $-s^{*}= 2c_-(D)$, we get that
for infinitely many $n>>0$ we have
\begin{equation}
-x^{*}(n+j)+ 4n c_- (D)\leq - 2(c(D) -v_A(D))(n+j)+ 4n c_-(D).
\end{equation}
Hence we obtain
$-x^{*}\leq  2(c(D) -v_A(D))$. This in turn, combined with equation \ref{Tur},  gives
\begin{equation}
\label{gTD}
x-x^*\leq 2(v_B(D)+v_A(D)-2c(D))=
2(2-2g_T(D)-c(D)). 
\end{equation}
On the other hand, by assumption, 
\begin{equation}
\label{gTK}
x -x^{* }=2(2-2g_T(K)-c(K)).
 \end{equation}
 Since $g_T(K)\leq g_T(D)$ and $c(D)=c(K)$, by equations \ref{gTD} and \ref{gTK},  we conclude that 
 $$g_T(K)= g_T(D), \ \ 
 x=2c(D)-2v_B \ \ {\rm and} \ \  x^*=2v_A-2c(D).$$
 This in turn implies that,  for infinitely many $n>n_K$,
  we have

 \begin{equation}
 \label{last1}
 4\, d_-[J_{K}(n)] =  -2c_-(D) n^2 + 2(c (D)-v_A(D))n  +f(D),
 \end{equation}
 
  and
  
   \begin{equation}
 \label{last2}
4\, d_+J_{K}(n)] =2c_+(D) n^2 + 2(c(D) -v_B(D)) n  +f^{*}(D),
\end{equation}
 
\noindent  where $f(D), f^{*}(D)$ are periodic functions of $n$. It follows that 
 $f(D), f^{*}(D)$ can take at most finitely many distinct values and that
 they are bounded by a universal constant.
   Now  Theorem
 \ref{degLe}  implies 
  that $D$ has to be both $A$ and $B$ adequate; hence adequate. For, otherwise one of inequalities \ref{lee1}, \ref{lee2} would have to hold, which would contradict
  equations \ref{last1}, \ref{last2}.
  
  To finish the proof of the theorem notice that the arguments above imply that if $K$ is a knot for which (1), (2) are satisfied and $D$
  is diagram of $K$ that realizes $c(K)$ then $D$ is adequate. Thus, by Theorem \ref{abe}, we also have $g_T(D)=g_T(K)$.

\end{proof}

 Now we explain how Theorem \ref{mainintro} follows: 
 \medskip

 {\em Proof of Theorem \ref{mainintro}.} First suppose that $K$ is a knot with an adequate diagram $D$. Then $c(K)=c(D)$.
 The first part of the proof of Theorem \ref{main} implies that equations \ref{first} are satisfied for all $n>0$.
 Suppose conversely that for some $n>n_K$, equations \ref{first} are satisfied. Since $s_1(n), s_2(n)$ are periodic with integral period we conclude that there must be
 infinitely many $n>>0$  for which equations \ref{first} are true. Taking $D$ a knot diagram of $K$ that realizes $c(K)$, the argument in the second proof of Theorem \ref{main} implies that $g_T(D)=g_T(K)$
 and that equations \ref{last1} and \ref{last2} hold for $D$. Hence as before $D$ is adequate.\qed
\medskip

Theorem \ref{thm:essential} implies that the Strong Slope Conjecture is true for adequate knots. The next result implies that for knots that satisfy
the conjecture the characterization provided by 
 Theorem \ref{main} can be expressed in terms of properties of their spanning surfaces.

\begin{theorem} \label{mainjones } Given a knot $K$ with  crossing number $c(K)$ and  Turaev genus    $g_T(K)$ the following are equivalent:
\smallskip
\begin{enumerate}
\item  $K$ is adequate.
\smallskip

\smallskip

\item There are Jones surfaces $S$ and $S^*$
with boundary slopes $s,s^*$ such that:
\begin{equation}
\label{surfaces}
s-s^*= 2c(K) \ \ {\rm and} \ \      \frac{\chi(S)}{{\abs{\partial S}}} + \frac{\chi(S^{*})}{{\abs{\partial S^{*}} }}+c(K) =2-2g_T(K).
\end{equation}
\smallskip
\item There are Jones surfaces $S$ and $S^*$, that
 are in addition spanning surfaces of $K$ (i. e. that is $\partial S=\partial S^{*}=K$)
 such that
\begin{equation}
\label{surfacesspa}
s-s^*= 2c(K) \ \  {\rm and} \ \      \chi(S)+ \chi(S^{*})+c(K) =2-2g_T(K).
\end{equation}
 \end{enumerate}
\end{theorem}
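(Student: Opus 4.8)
The plan is to prove the cycle $(1)\Rightarrow(3)\Rightarrow(2)\Rightarrow(1)$: the forward implication manufactures the required spanning surfaces from an adequate diagram using Theorem~\ref{thm:essential}, while the backward implication feeds the surface data into Theorem~\ref{main} to recover adequacy.

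For $(1)\Rightarrow(3)$ I would fix an adequate diagram $D=D(K)$ and take $S=S_B$ and $S^*=S_A$. By Theorem~\ref{thm:essential} both are Jones surfaces and both are spanning surfaces of $K$, with associated Jones slopes $s=2c_+(D)\in js_K$ and $s^*=-2c_-(D)\in js^*_K$, and with $\chi(S_B)=v_B(D)-c(D)$ and $\chi(S_A)=v_A(D)-c(D)$. Since $D$ is adequate, $c(K)=c(D)=c_+(D)+c_-(D)$, so $s-s^*=2c(K)$; and, combining $\chi(S)+\chi(S^*)+c(K)=v_A(D)+v_B(D)-c(D)$ with equation~\eqref{Tur} and Theorem~\ref{abe}, this quantity equals $2-2g_T(K)$. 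This verifies~\eqref{surfacesspa}. The step $(3)\Rightarrow(2)$ is then immediate: spanning surfaces have connected boundary $\partial S=\partial S^*=K$, so $\abs{\partial S}=\abs{\partial S^*}=1$ and~\eqref{surfacesspa} and~\eqref{surfaces} are literally the same two equations.

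The substance is in $(2)\Rightarrow(1)$. Given Jones surfaces $S,S^*$ satisfying~\eqref{surfaces}, write the slopes in lowest terms as $s=a/b$, $s^*=a^*/b^*$ with $b,b^*>0$; by the definition of a Jones surface, $x:=2\chi(S)/(\abs{\partial S}\,b)\in jx_K$ and $x^*:=-2\chi(S^*)/(\abs{\partial S^*}\,b^*)\in jx^*_K$. I would then pick a diagram $D$ realizing $c(K)$ and invoke the upper bounds of Theorem~\ref{degLe}: these show that every element of $js_K$ is at most $2c_+(D)$ and every element of $js^*_K$ is at least $-2c_-(D)$, so $s\le 2c_+(D)$ and $-s^*\le 2c_-(D)$, giving $s-s^*\le 2c(D)=2c(K)$. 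The hypothesis $s-s^*=2c(K)$ forces equality, so $s=2c_+(D)$ and $s^*=-2c_-(D)$ are integers and hence $b=b^*=1$. With the denominators gone, $x-x^*=2\big(\chi(S)/\abs{\partial S}+\chi(S^*)/\abs{\partial S^*}\big)$, which by~\eqref{surfaces} equals $2(2-2g_T(K)-c(K))$. Thus both hypotheses of Theorem~\ref{main} are met and $K$ is adequate.

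The main obstacle is precisely the denominators $b,b^*$ carried by the Jones-surface data when $S,S^*$ are allowed to be non-spanning: they sit between $\chi(S)/\abs{\partial S}$ in~\eqref{surfaces} and the quantity $2\chi(S)/(\abs{\partial S}\,b)$ that the definition of a Jones surface places in $jx_K$, so a direct comparison with the linear term in Theorem~\ref{main} is not available. The observation that dissolves this is that the slope equation $s-s^*=2c(K)$, together with Lee's sharp inequalities of Theorem~\ref{degLe}, pins the two Jones slopes to the integers $2c_+$ and $-2c_-$, which forces $b=b^*=1$; once this is in hand the reduction to Theorem~\ref{main} is routine.
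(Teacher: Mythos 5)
Your proposal is correct and follows essentially the same route as the paper: the forward direction uses Theorem~\ref{thm:essential} and the computation from the proof of Theorem~\ref{main} applied to $S_A$, $S_B$, and the converse uses Lee's bounds to pin the slopes to $2c_+(D)$ and $-2c_-(D)$, deduce $b=b^*=1$ from integrality, and then invoke Theorem~\ref{main}. The only difference is cosmetic: you organize the equivalences as the cycle $(1)\Rightarrow(3)\Rightarrow(2)\Rightarrow(1)$, while the paper proves $(1)\Leftrightarrow(2)$ first and then handles $(3)$.
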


\begin{proof} Suppose that $K$ is adequate. Then by
Theorem \ref{thm:essential},  and the calculation in the beginning of the proof of Theorem \ref{main},  the state surfaces $S_A$ and $S_B$ obtained from any adequate diagram of $K$ satisfy 
equations \ref{surfaces}. In fact, in this case, we have 
we have  $\abs{\partial S}=\abs{\partial S^{*}}=1$.

Conversely, suppose that there are Jones surfaces $S$ and $S^*$  with boundary slopes $s,s^*$ such that
$s-s^*= 2c(K)$.  By the proof of Theorem \ref{main}, if $D$ is a diagram realizing $c(K)$, we have
$s=2c_+(D)$ and $s^{*}=-2c_-(D)$.
Since  $S$ and $S^*$  have integral slopes,  the number of sheets in each of them is one; thus $b=b^{*}=1$.
Since  $S$ and $S^*$  are Jones surfaces, we have

$$x= \frac{2\chi(S)}{{\abs{\partial S}}} \in  jx_K \ \ \ {\rm and} \ \ \ x^{*}=\frac{- 2\chi(S^{*})}{{\abs{\partial S^{*}} }}\in  jx^{*}_K.$$
Thus we get 
$$x-x^{*}=2(2-2g_T(K)-c(K)).$$
Thus by Theorem \ref{main}, $D$ must be an adequate diagram of $K$.
This shows that (1) and (2) are equivalent.

Now (3), clearly implies (2). Finally, since,  by Theorem \ref{main}, (2)  implies that $K$ is adequate, and (3) is true for adequate knots, we get that (2) implies (3).
\end{proof}

\section{Alternating knots}
Recall that
a knot  $K$ is alternating if and only if $g_T(K)=0$ \cite{DFK}. Combining this with Theorem \ref{main} we will show the following.

\begin{corollary} \label{mainalter}  For a knot  $K$  let 
$js_K$, $ js^*_K $ and $jx_K, jx^{*}_K$ be the sets associated to $J_K(n)$ as in Definitions \ref{jslopes} and \ref{jchar}. Then, $K$ is alternating if and only if the following are true:
%\small skip
\begin{enumerate}
\item  There are Jones slopes $s\in js_K$ and $s^{*} \in  js^{*}_K$, with $s-s^{*} =2c(K);$ and
\medskip

\item there are  $x\in jx_K$ and $x^*\in  jx^{*}_K$ with $x-x^* =4-2c(K).$

\end{enumerate}

\end{corollary}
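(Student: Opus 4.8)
The plan is to derive this as a corollary of Theorem \ref{main}, using the characterization of alternating knots as exactly those with $g_T(K) = 0$ \cite{DFK}. The point is that substituting $g_T(K) = 0$ into part (2) of Theorem \ref{main} turns the quantity $2(2 - 2g_T(K) - c(K))$ into $4 - 2c(K)$, which is precisely the value appearing in part (2) of the corollary. So the two sets of conditions are designed to coincide exactly in the alternating case, and the proof amounts to making this coincidence rigorous in both directions.

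For the forward implication I would first recall that every alternating knot is adequate and satisfies $g_T(K) = 0$. Theorem \ref{main} then applies directly: its part (1) produces Jones slopes with $s - s^{*} = 2c(K)$, and its part (2), after setting $g_T(K) = 0$, produces $x \in jx_K$ and $x^{*} \in jx^{*}_K$ with $x - x^{*} = 2(2 - 2\cdot 0 - c(K)) = 4 - 2c(K)$. These are exactly conditions (1) and (2) of the corollary, so this direction is essentially immediate.

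The converse is where the real work lies, since a priori I cannot substitute $g_T(K) = 0$ into Theorem \ref{main} — I do not yet know that $K$ is alternating. Instead I would reuse the mechanism of the converse half of the proof of Theorem \ref{main}. Fix a diagram $D$ realizing $c(K)$. From condition (1) together with the degree bounds of Theorem \ref{degLe}, the same squeezing argument as in Theorem \ref{main} yields $s = 2c_+(D)$ and $-s^{*} = 2c_-(D)$, and hence the one-sided inequality $x - x^{*} \le 2(2 - 2g_T(D) - c(D))$ recorded in equation \ref{gTD}. Substituting condition (2), namely $x - x^{*} = 4 - 2c(K) = 2(2 - c(D))$, gives $2(2 - c(D)) \le 2(2 - 2g_T(D) - c(D))$, i.e. $g_T(D) \le 0$; since $g_T(D) \ge 0$ this forces $g_T(D) = 0$. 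By property (ii) of the Turaev surface $F(D)$ this means $D$ is an alternating diagram, so $g_T(K) \le g_T(D) = 0$ and $K$ is alternating by \cite{DFK}.

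The step I expect to be the main obstacle is exactly this mismatch in the converse: the corollary's condition (2) carries the fixed right-hand side $4 - 2c(K)$ rather than the $g_T$-dependent right-hand side of Theorem \ref{main}, so I cannot invoke Theorem \ref{main} as a black box. The resolution is that inequality \ref{gTD} holds for \emph{any} diagram, and, combined with the nonnegativity of the Turaev genus, it squeezes $g_T(D)$ down to zero. Once $g_T(D) = 0$ is secured the alternating conclusion follows at once, and no further appeal to adequacy or to the Lee inequalities is required.
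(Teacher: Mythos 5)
Your proposal is correct and follows essentially the same route as the paper: the forward direction is the direct substitution $g_T(K)=0$ into Theorem \ref{main}, and the converse reruns the inequality \ref{gTD} from the proof of Theorem \ref{main} for a diagram $D$ realizing $c(K)$ to force $g_T(D)=0$ and hence that $D$ is alternating. Your explicit observation that one cannot invoke Theorem \ref{main} as a black box in the converse, but must reuse its internal inequality, is precisely the point the paper's proof relies on.
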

\begin{proof}
If $K$ is alternating then Theorem \ref{main} and the fact that $g_T(K)=0$ imply that (1) and (2) hold. Conversely suppose that we have $s, s^{*}, x, x^{*}$ as in the statement above and let $D$ be a diagram of $K$ such that $c(D)=c(K)$.
Let $g_T(D)$ denote the Turaev genus of $D$. The argument in the proof of Theorem \ref{main} implies that

$$x-x^{* }=4-2c(K)\leq 4-2g_T(D)-2c(K),$$
which can only hold if $g_T(D)=0$ and hence $D$ is alternating.
\end{proof}

Recently, Howie  \cite{howie}  and independently Greene \cite{greene} gave characterizations of alternating knots in terms of properties of spanning surfaces.
In particular \cite[Theorem 2]{howie} states that a non-trivial knot $K$ is alternating if and only if it admits 
spanning surfaces  $S$ and $S^*$, such that the following holds.
$$
\chi(S)+\chi(S^{*})+  i(\partial S, \partial S^{*})/2=2  \ \ \ {\rm and} \ \ \  i(\partial S, \partial S^{*})/2=c(K),
$$

\noindent where $ i(\partial S, \partial S^{*})$ denotes the geometric intersection number
of $\partial S, \partial S^{*}$ on the $\partial M_K$. Note, we assume that this intersection number is minimal in the isotopy classes of  $\partial S, \partial S^{*}$.

Combining  this with Theorem \ref{mainjones } 
we obtain the following.
\begin{corollary} Given a non-trivial  knot $K$ with crossing number $c(K)$ the following are equivalent.
\begin{enumerate}
\item $K$ is alternating.
\item There are Jones surfaces $S$ and $S^*,$ that are spanning surfaces of $K$ 
with boundary slopes $s,s^*$ such that
 $$ \chi(S)+\chi(S^{*})+ (s-s^{*})/2=2  \ \  {\rm and} \ \  s-s^{*}= 2c(K) $$
\item  There are spanning surfaces  $S$ and $S^*$ of $K$
such that
$$ \chi(S)+\chi(S^{*})+i(\partial S,  \partial S^{*})/2=2  \ \ \ {\rm and} \ \ \  i(\partial S,  \partial S^{*})=2c(K).$$

\end{enumerate}
\end{corollary}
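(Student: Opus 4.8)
The plan is to establish the three-way equivalence by routing everything through statement (1), using the two characterizations already available: Theorem \ref{mainjones } together with Corollary \ref{mainalter} for (1)$\Leftrightarrow$(2), and Howie's theorem \cite[Theorem 2]{howie} for (1)$\Leftrightarrow$(3). A preliminary observation streamlines the argument: since all three statements quantify over \emph{spanning} surfaces, one always has $\abs{\partial S}=\abs{\partial S^{*}}=1$, so the normalized quantities $\chi(S)/\abs{\partial S}$ appearing in Theorem \ref{mainjones } collapse to $\chi(S)$, the boundary slopes $s,s^{*}$ are integers, and each surface has a single boundary sheet, i.e.\ $b=b^{*}=1$.

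First I would prove (1)$\Rightarrow$(2). If $K$ is alternating then $g_T(K)=0$, so the implication (1)$\Rightarrow$(3) of Theorem \ref{mainjones } produces spanning Jones surfaces $S,S^{*}$ with boundary slopes $s,s^{*}$ satisfying $s-s^{*}=2c(K)$ and $\chi(S)+\chi(S^{*})+c(K)=2-2g_T(K)=2$. Substituting $(s-s^{*})/2=c(K)$ rewrites the second identity as $\chi(S)+\chi(S^{*})+(s-s^{*})/2=2$, which is precisely (2).

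Next I would prove (2)$\Rightarrow$(1). Given the spanning Jones surfaces of (2), using $(s-s^{*})/2=c(K)$ the first equation becomes $\chi(S)+\chi(S^{*})+c(K)=2$. Because the surfaces are spanning with integral slopes, the definition of a Jones surface (with $b=b^{*}=1$ and $\abs{\partial S}=\abs{\partial S^{*}}=1$) yields $x:=2\chi(S)\in jx_K$ and $x^{*}:=-2\chi(S^{*})\in jx^{*}_K$, so $x-x^{*}=2(\chi(S)+\chi(S^{*}))=2(2-c(K))=4-2c(K)$. Together with $s\in js_K$, $s^{*}\in js^{*}_K$ and $s-s^{*}=2c(K)$, these are exactly conditions (1) and (2) of Corollary \ref{mainalter}, whence $K$ is alternating. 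Finally, (1)$\Leftrightarrow$(3) is a verbatim restatement of \cite[Theorem 2]{howie} after rewriting $i(\partial S,\partial S^{*})/2=c(K)$ as $i(\partial S,\partial S^{*})=2c(K)$; combining the two equivalences gives the corollary.

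I do not expect any single step to be a serious obstacle; the content is the bookkeeping in (2)$\Rightarrow$(1), where one must confirm, exactly as in the proof of Theorem \ref{mainjones }, that integrality of the boundary slopes of spanning surfaces forces one boundary sheet, so that $x,x^{*}$ can be read off directly. It is worth recording the direct geometric bridge between (2) and (3) that bypasses (1): for two connected spanning-surface boundaries of integral slopes $s,s^{*}$ on $\bdy M_K$, each primitive and homologous to the longitude plus a meridional multiple, the minimal geometric intersection number equals $\abs{s-s^{*}}$; since $s-s^{*}=2c(K)>0$ for a non-trivial knot, one gets $i(\partial S,\partial S^{*})=2c(K)$ and $i(\partial S,\partial S^{*})/2=(s-s^{*})/2$, so the two Euler-characteristic equations coincide and (2) implies (3) simply by discarding the Jones-surface hypothesis. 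The main care required is thus not any hard step but keeping the several normalization conventions consistent.
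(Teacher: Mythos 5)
Your proof is correct and follows essentially the same route as the paper: the equivalence (1)$\Leftrightarrow$(2) is the degree/Turaev-genus argument (you package the reverse direction as an appeal to Corollary \ref{mainalter}, while the paper inlines the same inequality $x-x^*\leq 2(2-2g_T(D)-c(D))$ to force $g_T(D)=0$), and the equivalence with (3) rests on Howie's theorem together with the observation that connected boundaries of integral slopes $s,s^*$ on $\partial n(K)$ intersect minimally in $|s-s^*|$ points. The only cosmetic difference is that you treat (1)$\Leftrightarrow$(3) as a direct quotation of \cite[Theorem 2]{howie}, whereas the paper proves (2)$\Rightarrow$(3) explicitly via $s=2c_+(D)$, $s^*=-2c_-(D)$ and uses Howie only for (3)$\Rightarrow$(1); both organizations are valid.
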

\begin{proof}
Suppose that  $D$  is a reduced alternating diagram for  a knot $K$. Then,
$D$ is both $A$ and $B$-adequate and $g_T(D)=g_T(K)=0$.
Now the checkerboard surfaces, $S, S^{*}$ of $D$ are the all-$A$ and all-$B$ state surface  which are Jones surfaces.
Thus they satisfy the desired properties.

Conversely suppose that we have Jones surfaces $S$ and $S^{*}$ as above and let $D$ be a diagram of $K$ that realizes
$c(K)$. Set
$$ x= 2\chi(S) \ \ \ {\rm and} \ \ \ x^{*}= -2\chi(S^{*})
$$

By hypothesis,
\begin{equation}
\label{last11}
x-x^{*}=4-2c(K)
\end{equation}
By Theorem \ref{degLe}, as in the proof of Theorem \ref{main},
we get 
\begin{equation}
\label{last12}
x-x^*\leq 2(v_B(D)+v_A(D)-2c(D))=
2(2-2g_T(D)-c(D)).
\end{equation}
Since $c(D)=c(K)$, combining  equations \ref{last11} and \ref{last12}  we have
$$4-2c(K)\leq 2(2-2g_T(D)-c(K)), $$
which gives $g_T(D)=0$. This in turn implies that $D$ is alternating \cite{DFK}.
Thus we showed that (1) and (2) are equivalent. 

Now we explain how (2) implies (3): Let $D$ be a diagram of $K$ that realizes $c(K)$; as above $D$ is alternating.
The argument in the proof  of Theorem \ref{main} shows that 
if we have Jones slopes $s, s^{*}$  as above such  that $2c(K)=s-s^{*}$ then 
$s=2c_+(D)$ and $s^{*}=-2c_-(D)$. Suppose that the simple closed curves $\partial S, \partial S^{*}$ have been isotoped on the torus $\partial n(K)$ to minimize their intersection number.
 Then we have
$i(\partial S, \partial S^{*})=2c_+(D)-(-2c_-(D)) =2c(K)=s-s^{*}$; thus (3) follows.
Hence (1) above implies (3).  

Finally, by  \cite[Theorem 2]{howie}, 
(3) implies (1). 
\end{proof}

\bibliographystyle{plain} \bibliography{biblio}
\end{document}